\definecolor{gris}{rgb}{.5,.5,.5}
\definecolor{0}{RGB}{117,170,219}
\definecolor{1}{rgb}{1,0.4,0.4}
\newtheorem{lemma}{Lemma}
\newtheorem{theorem}{Theorem}
\newtheorem*{theorem*}{Theorem}
\newtheorem{definition}{Definition}
\newtheorem{claim}{Claim}
\title{$s$-Stable Kneser Graph are Hamiltonian
}
\author{Agustina V. Ledezma\footnote{agustina.ledezma.tag@gmail.com}~
and Adri\'{a}n G. Pastine\footnote{adrian.pastine.tag@gmail.com}\\
Instituto de Matem\'atica Aplicada San Luis (UNSL-CONICET),\\
\&
Departamento de Matem\'atica,
Universidad Nacional de San Luis, \\
Ej\'ercito de los Andes 950 (D5700HHW),\\
San Luis, Argentina\\}
\begin{document}
\maketitle
\sloppy

\begin{abstract}
The Kneser Graph $K(n,k)$ has as vertices all $k$-subsets of $\{1,\ldots,n\}$ and edges connecting two vertices if they are disjoint. The $s$-stable Kneser  Graph 
$K_{s-stab}(n, k)$ is obtained from the Kneser graph by deleting vertices with elements at cyclic distance less than $s$. In this article we show that connected $s$-Stable Kneser graph are Hamiltonian.

\noindent {\bf Keywords}: Kneser graphs, Hamiltonian graphs, Stable Kneser Graphs, Schrijver Graphs.
\end{abstract}

\section{Introduction}
Let $[n] = \lbrace 1,...,n \rbrace $. For each $ n \geq 2k$, $n,k \in \lbrace 1,2,3, \dots \rbrace$, the \textbf{Kneser Graph} $K(n, k)$ has as vertices the set $  {[n] \choose k} = \lbrace S \subseteq [n] : \ |S| = k \rbrace$, the k-subsets of $[n]$, where two vertices are adjacent if they are disjoint.
Kneser graphs have been widely studied throughout the literature. Two of the most important problems in the history of Kneser graphs are its chromatic number (proven by Lov\'{a}sz \cite{Lov} to be $n-k+2$) and their Hamiltonicity. It was long conjectured that, with the exception of $K(5,2)$, all connected Kneser graphs are Hamiltonian. Many articles studied this problem (see \cite{Chen}, \cite{Mut}, \cite{MNW} and \cite{SS}) until recently (2023), when Merino, M{\"u}tze, and Namrata showed in \cite{MMN}  that the conjecture is true.

In \cite{Sch}, Schrijver introduced a family of subgraphs of Kneser graphs which are vertex critical in terms of their chromatic number. These graphs received the name of Schrijver graphs, are denoted by $SG(n,k)$, and are obtained from the Kneser graph $K(n,k)$ by deleting vertices $S\subset {[n] \choose k}$ containing consecutive elements modulo $n$. Schrijver showed that the chromatic number of $SG(n,k)$ is $n-k+2$, and  that deleting any vertex from $SG(n,k)$ reduces its chromatic number.
This family of graphs was generalized in \cite{defestables}, where Alon, Drewnoski and Luczak introduced the concept of $s$-stable Kneser graphs. The \textbf{$s$-Stable Kneser Graph}, $K_{s-stab}(n, k)$, is the graph that has as vertex set $S\subset {[n] \choose k}$ such that $s \leq \mid i-j\mid \leq n-s$ for every pair $i,j \in$ S, and edges between disjoint vertices. Notice that $K_{s-stab}(n, k)$ is an induced subgraph of $K(n,k)$, and that $K_{2-stab}(n, k)$ and $SG(n,k)$ are the same graph. Since the family of $s$-stable Kneser graphs was introduced, both it in general, and more specifically the family of Schrijver graphs, started receiving much attention, partly due to their applications to topology (see \cite{aplicaciontopo}). Thus, several properties of this family have been studied, such as its automorphism group (see \cite{Braun} and \cite{Tor}), its chromatic number (see \cite{Meu} and \cite{TorPav}), its diameter (see \cite{paper}), hom-idempotence (see \cite{TorPav}), independence complexes (see \cite{Indepcomplex}), and neighborhood complexes (see \cite{Neighcomplexes}).

The problem of Hamiltonicity of $s$-Stable Kneser graphs began by the authors of this manuscript in 2019, and earlier results were presented at  annual meeting of Uni\'on Matem\'atica Argentina (Argentinian Mathematical Union) in 2019 and in 2021. The main result of this article is the following.
\begin{theorem}\label{theo: main}
    Let $k\geq 1$ and $s\geq 3$.  The graph $K_{s-stab}(n, k)$ is Hamiltonian if and only if $n\geq sk$. The graph $K_{2-stab}(n, k)$ is Hamiltonian if and only if $n\geq 2k+1$.
\end{theorem}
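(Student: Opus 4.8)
The plan is to treat the two directions separately. For necessity I would first note that a vertex of $K_{s-stab}(n,k)$ is precisely a placement of $k$ points on the cycle $\mathbb{Z}_n$ whose $k$ cyclic gaps each have length at least $s$; since the gaps sum to $n$, the graph has a vertex at all only when $n \ge sk$, which already forces the threshold. I would then identify the boundary graphs explicitly. When $n = sk$ every gap must equal $s$, so the only vertices are the $s$ arithmetic progressions of common difference $s$, i.e. the residue classes modulo $s$; these partition $[n]$, so any two distinct ones are disjoint and $K_{s-stab}(sk,k) \cong K_s$. Hence for $s \ge 3$ the case $n = sk$ is already Hamiltonian, whereas for $s = 2$ one gets $K_{2-stab}(2k,k) \cong K_2$, which is not, so the threshold must rise to $n \ge 2k+1$; a short count shows $K_{2-stab}(2k+1,k)$ has exactly $2k+1$ vertices and is in fact the cycle $C_{2k+1}$, confirming the value for $s=2$. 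This settles necessity and simultaneously pins down the base cases.

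For sufficiency I would argue by induction, the cleanest scheme being a double induction on $k$ and $n$ with base case $k=1$, where stability is vacuous, $K_{s-stab}(n,1) = K(n,1) = K_n$, and the threshold reduces to $n \ge 3$. For the inductive step I would fix a reference element, say $1$, and decompose the vertices by a distinguished coordinate such as whether $1$ is occupied and the length of the gap following it. The sets containing $1$ form an independent set, each corresponding to a stable $(k-1)$-subset of the interval $\{s+1,\dots,n-s+1\}$; collapsing element $1$ together with its two flanking buffers of total length $s$ turns the length-$n$ cycle into a length-$(n-s)$ one and, in the correspondence one would make precise, sends these to the vertices of $K_{s-stab}(n-s,k-1)$. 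The aim is then to extract, from a Hamilton cycle of the smaller graph supplied by induction, sub-paths covering each ``slice'', and to concatenate the slices as the distinguished element rotates through $\mathbb{Z}_n$, closing up into one spanning cycle. To make this concatenation possible I expect to have to strengthen the induction hypothesis from ``has a Hamilton cycle'' to ``has a Hamilton path between two prescribed vertices of a controlled type'', the usual device for chaining substructures, since each slice must be entered and exited at matching vertices, and the connectors between consecutive slices are explicit edges of $K_{s-stab}(n,k)$, i.e. concretely exhibited pairs of disjoint stable $k$-sets whose endpoints splice the path-ends of adjacent slices.

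I expect the main obstacle to be exactly this gluing. Unlike the ordinary Kneser graph $K(n,k)$, the graph $K_{s-stab}(n,k)$ is \emph{not} vertex-transitive: its only symmetries are the dihedral rotations and reflections of $\mathbb{Z}_n$, so the connector edges cannot be obtained by symmetry and must be constructed and verified by hand, checking disjointness, stability, and that the splice yields a single cycle rather than several. Controlling these interfaces uniformly across all slices, and preserving the strengthened Hamilton-path hypothesis even for the degenerate interval (path-stable) pieces that appear near the reference element, is where the real work lies. Once the gluing is in place, the remaining bookkeeping — the vertex counts, the two thresholds, and the split between $s=2$ and $s\ge 3$ — follows from the base cases identified above.
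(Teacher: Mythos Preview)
Your treatment of necessity and of the boundary cases $n=sk$ and $n=2k+1$ is correct and matches the paper.

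For sufficiency, what you have is a programme rather than a proof, and the programme contains a concrete error and an acknowledged hole. The error: the vertices of $K_{s-stab}(n,k)$ containing $1$ are \emph{not} in bijection with the vertices of $K_{s-stab}(n-s,k-1)$. Deleting $1$ and contracting $s$ positions sends such a vertex to an $s$-spaced $(k-1)$-set on a cycle of length $n-s$, but the wrap-around gap of the image is the sum of the two gaps flanking $1$ minus $s$, which is unconstrained from above; conversely not every vertex of $K_{s-stab}(n-s,k-1)$ arises (for $s=2$, $n=7$, $k=2$ there are four vertices through $1$ but $K_{2-stab}(5,1)$ has five). The honest target is a path-stable family, as you yourself hint later, so the induction cannot simply invoke $K_{s-stab}(n-s,k-1)$ and the recursive scheme must be rewritten before it even starts. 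The hole is the one you name yourself: the whole argument rests on building connector edges between slices and on establishing the strengthened Hamilton-path hypothesis with prescribed endpoints, and you do neither. Writing that this ``is where the real work lies'' and that the rest follows ``once the gluing is in place'' is not a proof of the theorem; it is a description of what remains to be proved, and with only dihedral symmetry available there is no a priori reason the endpoint control can be maintained uniformly.

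The paper avoids induction altogether. It quotients by the rotation action of $\mathbb{Z}_n$: each orbit (``class'') $\mathcal{A}$ already carries a spanning cycle $V,V{+}1,\dots,V{+}|\mathcal{A}|,V$ inside $K_{s-stab}(n,k)$. On the quotient (``class graph'') the paper singles out the \emph{friend} edges, where one class is obtained from another by moving a single element one step; these edges make the class graph connected, and a class written as a $d$-fold repetition of a block of length $k/d$ has at most $k/d<n/d=|\mathcal{A}|$ friends. Any spanning tree $T$ of the friend subgraph therefore satisfies $\deg_T(\mathcal{A})\le|\mathcal{A}|$ everywhere, and one merges the per-class rotation cycles along $T$ by the standard two-edge swap: at each tree edge replace one rotation edge in each of the two incident classes by the pair of cross edges between them. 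The degree bound guarantees enough rotation edges remain to cut, and the result is a single Hamilton cycle. This orbit-merging argument is short, uniform in $n,k,s$, and needs no endpoint bookkeeping; it is the structural idea your inductive scheme lacks.
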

For a conference talk (in Spanish) presenting these results, we direct the reader to
\url{https://www.youtube.com/watch?v=9ZJ9-ruKmG0}. Similar results were obtained independently by M{\"u}tze and Namrata in \cite{mutze2024hamiltonicity}.

The rest of the article is organized as follows. In Section \ref{Section:Prelim} we introduced some notation and give a little insight on the behaviour of the vertices of $K_{s-stab}(n, k)$. Then, in Section \ref{Section: class graph} we introduced some auxiliary graphs given by the orbits of the vertices of $K_{s-stab}(n, k)$ under rotation of its elements, and use them to prove our main result.

\section{Preliminaries}\label{Section:Prelim}

\begin{definition}\label{Stable Kneser Graphs}
The \textbf{$s$-Stable Kneser Graph}, $K_{s-stab}(n, k)$, has vertex set $S\subset {[n] \choose k}$ such that $s \leq \mid i-j\mid \leq n-s$ for every pair $i,j \in$ S, and edges between disjoint vertices.
\end{definition}

Let $K_{s-stab}(n, k)$,  and let $V$ be a vertex of the graph, for a better understanding of vertices and edges, we represent $V$ as a grid of $n$ squares, ordered from the first position to the position $n$ (which is connected to the first one), in which $k$ of the positions are occupied by ``$x$'' as indicated in $V$, and the remain positions are white spaces.
Then, for instance, the vertex $\lbrace 1,3,5 \rbrace$ of $K_{2-stab}(9, 3)$ is a grid in which positions 1, 3 and 5 are ``$x$''. See Figure \ref{v13}. Notice, in particular, that when $n=sk$, $K_{s-stab}(n, k)$ is isomorphic to $K_s$.

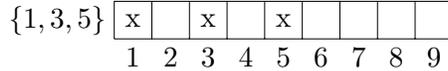
\begin{figure}
\centering 
\begin{tikzpicture}[x=.5cm,y=.5cm]
\draw[step=.5cm,very thin] (0,0) grid (9,1);
\node[draw=none,shape=rectangle] at (-1.5,.5) {$\{1,3,5\}$}; 
\node[draw=none,shape=rectangle] at (0.5,.5) {x};   
\node[draw=none,shape=rectangle] at (2.5,.5) {x};      
\node[draw=none,shape=rectangle] at (4.5,.5) {x};
\node[draw=none,shape=rectangle] at (0.5,-.5) {1};   
\node[draw=none,shape=rectangle] at (1.5,-.5) {2};   
\node[draw=none,shape=rectangle] at (2.5,-.5) {3};
\node[draw=none,shape=rectangle] at (3.5,-.5) {4};
\node[draw=none,shape=rectangle] at (4.5,-.5) {5};
\node[draw=none,shape=rectangle] at (5.5,-.5) {6};   
\node[draw=none,shape=rectangle] at (6.5,-.5) {7};   
\node[draw=none,shape=rectangle] at (7.5,-.5) {8};
\node[draw=none,shape=rectangle] at (8.5,-.5) {9};
\end{tikzpicture}
\caption{Grid of the vertex $\lbrace 1,3,5 \rbrace$ in $K_{2-stab}(9, 3)$.}
\label{v13}
\end{figure}

It is also interesting to think a vertex as a list of the spaces between the ``$x$''.
We define the class of a vertex $V \in K_{s-stab}(n, k)$ as the orbit of $V$ under the cyclic rotation of the elements of $[n]$. As this rotation defines an automorphism of $K_{s-stab}(n, k)$, it will be helpful to study when a vertex in a class is adjacent to a vertex in a different class. For this purpose we represent the class of $V$ with the list of spaces between the elements of $V$, although there is some ambiguity in this as other vertices in the same class may present a rotation of the list of spaces. As an example, 
in Figure \ref{clasesdif} we present all the elements of the classes $(1,2,3)$ and $(1,3,2)$ of vertices in $K_{2-stab}(9, 3)$. As we can notice those classes are different. Nevertheless, $(1,2,3)$, $(2,3,1)$ and $(3,1,2)$ are the same class.

\begin{figure}
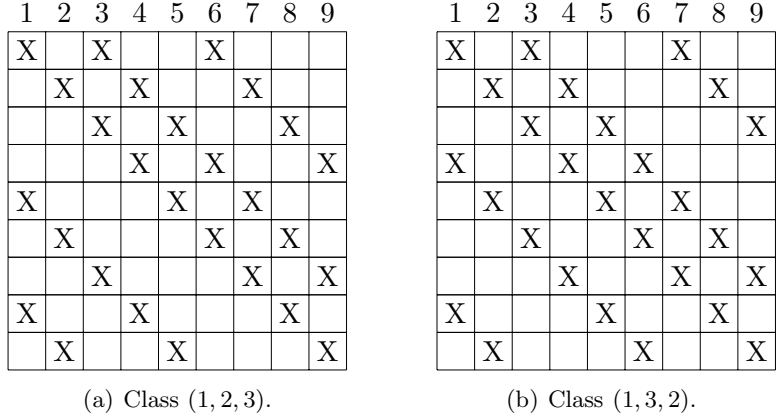

    \centering
\subfigure [t] [Class $(1,2,3)$.] 
{
\tikz[x=.5cm,y=.5cm]{
{\foreach \i in {0,1,2,3,4,5,6,7,8}
{
\pgfmathtruncatemacro{\r}{(mod(\i+2,9))};
\pgfmathtruncatemacro{\s}{(mod(\i+5,9))};
\draw[step=.5cm,very thin] (0,-1*\i) grid (9,1-1*\i);
\node[draw=none,shape=rectangle] at (\i+.5,-1*\i+0.5) {X};         
\node[draw=none,shape=rectangle] at (\r+.5,-1*\i+0.5) {X};
\node[draw=none,shape=rectangle] at (\s+.5,-1*\i+0.5) {X};
}
{\foreach \p in {0,1,2,3,4,5,6,7,8}{
\pgfmathtruncatemacro{\pp}{(mod(\p+1,15))};
\node[draw=none,shape=rectangle] at (\p+.5,1.5) {$\pp$};}}
}}
} \qquad
\subfigure [t] [Class $(1,3,2)$.] 
{
\tikz[x=.5cm,y=.5cm]{
{\foreach \i in {0,1,2,3,4,5,6,7,8}
{
\pgfmathtruncatemacro{\r}{(mod(\i+2,9))};
\pgfmathtruncatemacro{\s}{(mod(\i+6,9))};
\draw[step=.5cm,very thin] (0,-1*\i) grid (9,1-1*\i);
\node[draw=none,shape=rectangle] at (\i+.5,-1*\i+0.5) {X};         
\node[draw=none,shape=rectangle] at (\r+.5,-1*\i+0.5) {X};
\node[draw=none,shape=rectangle] at (\s+.5,-1*\i+0.5) {X};
}
{\foreach \p in {0,1,2,3,4,5,6,7,8}{
\pgfmathtruncatemacro{\pp}{(mod(\p+1,15))};
\node[draw=none,shape=rectangle] at (\p+.5,1.5) {$\pp$};}}
}}}
\caption{Grid of vertices in $K_{2-stab}(9, 3)$.}
    \label{clasesdif}

\end{figure}


Let $V= \{ a_1,a_2, \dots , a_k \}$ be a vertex in $K_{s-stab}(n, k)$, and $V+1=\{ a_1+1,a_2+1, \dots , a_k+1 \}$ the rotation of $V$. Since between every element in $V$ there are at least $s-1$ spaces, when we rotate every position there is no intersection in the elements of both vertices. As we can visualize the elements of a vertex as ``$x$'' in a grid, the rotation of a vertex is in fact rotate one place to the right (cyclically) all the ``$x$''s.
Let $\mathcal{A}$ be a class of vertices in $K_{s-stab}(n, k)$, we define its order $|\mathcal{A}|$ to be the number of vertices in the class.
Then, given a vertex $V\in \mathcal{A}$, we can form a cycle spanning all vertices in $\mathcal{A}$ by successive rotations.
\begin{claim}
For each class $\mathcal{A}$ and each vertex $V\in \mathcal{A}$, 
\[
V+1,V+2,\ldots,V+|\mathcal{A}|, V
\]
is a cycle spanning the vertices of $\mathcal{A}$.
\label{rotarvecinos da un ciclo}
\end{claim}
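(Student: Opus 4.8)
The plan is to combine two ingredients: first, that any two consecutive rotations are adjacent in $K_{s-stab}(n,k)$, and second, that the list $V+1,V+2,\ldots,V+|\mathcal{A}|$ enumerates the orbit $\mathcal{A}$ exactly once before returning to $V$. The adjacency is essentially immediate, so the real work is in the orbit bookkeeping.

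First I would record the adjacency of consecutive rotations. The paragraph preceding the claim already establishes that $V$ and $V+1$ are disjoint: if some $a_i+1$ coincided with some $a_j$, then the two elements $a_i,a_j\in V$ would satisfy $|a_i-a_j|=1$ (or $n-1$), i.e.\ cyclic distance $1<s$, contradicting the stability condition $s\le |a_i-a_j|\le n-s$. Hence $V$ and $V+1$ are adjacent. Since rotation by any fixed amount is an automorphism of $K_{s-stab}(n,k)$, applying the rotation by $i$ to this edge shows that $V+i$ and $V+(i+1)$ are adjacent for every $i$.

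Next I would pin down the combinatorics of the class. Let $d$ be the smallest positive integer with $V+d=V$. The set of integers $j$ with $V+j=V$ is closed under addition and subtraction and contains $n$, so it is the subgroup $\langle d\rangle$ of $\mathbb{Z}_n$ with $d\mid n$; equivalently, $d$ is the stabilizer generator, and by orbit--stabilizer the orbit size equals $d$, i.e.\ $|\mathcal{A}|=d$. Consequently $V,V+1,\ldots,V+(|\mathcal{A}|-1)$ are pairwise distinct and exhaust $\mathcal{A}$, while $V+|\mathcal{A}|=V$. Thus the listed vertices $V+1,V+2,\ldots,V+|\mathcal{A}|$ are exactly the $|\mathcal{A}|$ vertices of $\mathcal{A}$, each occurring once, with the final entry equal to $V$.

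Finally I would assemble the cycle: consecutive entries of the list are adjacent by the first step, and the last entry $V+|\mathcal{A}|=V$ is adjacent to the first entry $V+1$ (again because $V\sim V+1$). Since the list has no repetitions and covers all of $\mathcal{A}$, the sequence is a spanning cycle of $\mathcal{A}$. The one point needing care is the small-orbit degeneracy: because $V$ and $V+1$ are disjoint and $V\neq\emptyset$, we have $V\neq V+1$, so $|\mathcal{A}|\ge 2$; and for $s\ge 3$ the value $|\mathcal{A}|=2$ is impossible, since $V+2=V$ would force $a_i,a_i+2\in V$, a pair at cyclic distance $2<s$. I expect this orbit-size identification and the degenerate-case check, rather than the adjacency itself, to be the main thing to get right.
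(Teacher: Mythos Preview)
Your argument is correct and in fact supplies what the paper omits: the paper states this claim without proof, relying only on the immediately preceding observation that $V$ and $V+1$ are disjoint, so your adjacency step and orbit bookkeeping are exactly the intended (but unwritten) reasoning. The one loose end is the $s=2$ subcase of your degeneracy check: you rule out $|\mathcal{A}|=2$ only for $s\ge 3$, but for $s=2$ the same conclusion holds under the paper's standing hypothesis $n\ge 2k+1$, since $V+2=V$ would force $V$ to consist of all elements of a fixed parity and hence $k=n/2$, so the cycle is genuine wherever the claim is actually applied.
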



We denote by  $(a_1,a_2, \dots , a_i)^j$ the concatenation of $j$ copies of $a_1,a_2, \dots , a_i$. I.e., $(a_1,a_2, \dots , a_i)^j=(a_1,a_2, \dots , a_i, a_1,a_2, \dots , a_i, \dots, a_1,a_2, \dots , a_i)$.  Using this notation, we can cleanly represent the order of a class, as follows.

\begin{claim}
$|\mathcal{A}|=\dfrac{n}{d}$ if and only if $d$ is the smallest number such that $\mathcal{A} = (a_1,a_2, \dots , a_{\frac{k}{d}})^d$.
\label{cardclase}
\end{claim}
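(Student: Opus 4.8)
The plan is to compute $|\mathcal{A}|$ through the rotation action and then translate the answer into the language of the space-list. First I would invoke Claim~\ref{rotarvecinos da un ciclo}: since $V+1, V+2, \ldots, V+|\mathcal{A}|, V$ is a cycle passing through exactly the vertices of $\mathcal{A}$, the order $|\mathcal{A}|$ equals the least positive integer $p$ with $V+p=V$, that is, the minimal period of $V$ under rotation. (Equivalently, for the action of the rotation group $\mathbb{Z}_n$ one has $|\mathcal{A}| = n/|\mathrm{Stab}(V)|$, and $\mathrm{Stab}(V)=\langle p\rangle$ has order $n/p$, so again $|\mathcal{A}|=p$.) Everything therefore reduces to expressing $p$ in terms of the list $(a_1,\ldots,a_k)$ representing $\mathcal{A}$.

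Next I would describe exactly which rotations fix $V$. Writing the elements of $V$ cyclically as $v_1,v_2,\ldots,v_k$, so that the consecutive spaces are recorded by $(a_1,\ldots,a_k)$, a rotation by $t$ maps $V$ to itself if and only if adding $t$ permutes the $v_i$. Because rotation preserves the cyclic order of the elements, such a permutation must be a cyclic shift $v_i\mapsto v_{i+m}$ by a fixed amount $m$, and then $t\equiv v_{i+m}-v_i \pmod n$ must be independent of $i$. I would show this is equivalent to the space-list being invariant under the shift by $m$ positions, i.e.\ $a_{i+m}=a_i$ for all $i$ (indices taken modulo $k$). Conversely, every such $m$ produces a genuine symmetry of $V$, whose rotation amount $t$ is the common length $v_{i+m}-v_i$ of the arc spanned by any $m$ consecutive elements.

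I would then identify the minimal period. The shifts $m$ with $a_{i+m}=a_i$ for all $i$ are precisely the periods of the cyclic word $(a_1,\ldots,a_k)$; they form the multiples of the minimal period $\ell$, and $\ell\mid k$. Setting $d=k/\ell$, this is exactly the statement that $(a_1,\ldots,a_k)=(a_1,\ldots,a_\ell)^d$ with the block $(a_1,\ldots,a_\ell)$ primitive, so $\ell=k/d$ is the shortest repeating block and $d$ is the $d$ of the statement. The smallest positive shift is $m=\ell$; the symmetry it induces partitions the $n$ cyclic positions into $d$ congruent arcs, each containing $k/d$ of the elements, so the corresponding rotation amount is $n/d$. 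This is the minimal period $p$, and combined with the first step it yields $|\mathcal{A}|=p=n/d$.

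The step I expect to be the main obstacle is the correspondence in the second paragraph: proving that every rotation fixing $V$ acts on the elements as an honest cyclic shift of the space-list, ruling out any misaligned matching of elements, and conversely that each period of the space-word gives a symmetry whose rotation amount is the associated block arc. Once this dictionary between periods of the set $V$ and periods of its space-list is in place, identifying the minimal period with the primitive block and the minimal rotation with $n/d$ is routine.
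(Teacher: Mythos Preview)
Your outline is correct, and in fact the paper states this claim without proof---it is left as a self-evident observation---so there is no argument on the paper's side to compare against. Your proposal supplies exactly the missing justification: you reduce $|\mathcal{A}|$ to the minimal rotation period of $V$ via the orbit--stabilizer viewpoint, set up a dictionary between rotations fixing $V$ and periods of the cyclic space-word $(a_1,\ldots,a_k)$, and then read off the minimal rotation as $n/d$ from the primitive block of length $k/d$.

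One small point worth making explicit when you write it out: you use that the smallest shift $m=\ell$ of the space-word yields the smallest rotation amount $t$. This holds because the rotation attached to a shift by $m$ is $t_m=\sum_{j=1}^{m}(a_j+1)$, which is strictly increasing in $m$ over $1\le m\le k$; hence minimal $m$ gives minimal $t$, and the stabilizer of $V$ in $\mathbb{Z}_n$ is precisely $\langle n/d\rangle$. Your worry about ``misaligned matchings'' in the second paragraph is already handled by your own remark that a rotation preserves the cyclic order of the elements of $V$, which forces it to act as a cyclic shift of the indices.
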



Notice that in order to have a class of order $n/d$, $d$ must divide both $n$ and $k$. Thus, it must divide the number of blank places, $n-k$. As an example, lets look at the grid of the graph $K_{3-stab}(36, 6)$ there are $36-6=30$ blank places to divide into $6$ blocks, such that each block has at least $2$ blank places. 
We know that $d$ is a common divisor between 36 and 6, if and only if $d$ is a common divisor between 30 and 6. As $1,2,3$ and $6$ are the common divisors between $36$ and $6$, there are classes of order $36,18,12 $ and $6$. In Table \ref{algunas} we can observe some of those classes.

\begin{table}[h]
    \centering
    \begin{tabular}{|l|l|}
    \hline
    Class     & Order\\
    \hline
    (2,2,2,2,11,11)  & 36\\
    $(2,2,11)^2=(2,2,11,2,2,11)$  & 36/2=18\\
    $(2,8)^3=(2,8,2,8,2,8)$  & 36/3=12\\
    $(6)^6=(6,6,6,6,6,6)$  & 36/6=6\\
    \hline
    \end{tabular}
    \caption{Some classes of the graph $K_{3-stab}(36, 6)$.}
    \label{algunas}
\end{table}

\begin{lemma}\label{vecinos entre clases}
    Let $A$ and $B$ be vertices in different classes of $K_{s-stab}(n, k)$.
    Then, $A$ and $B$ are adjacent if and only if $A+1$ and $B+1$ are adjacent.
\end{lemma}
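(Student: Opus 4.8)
The plan is to unwind the definition of adjacency and reduce everything to the single observation that the rotation map $x\mapsto x+1$ is a bijection of $[n]$ (with arithmetic taken modulo $n$). Recall that two vertices are adjacent precisely when they are disjoint as subsets of $[n]$, so the assertion is equivalent to the set-theoretic statement that $A\cap B=\emptyset$ if and only if $(A+1)\cap(B+1)=\emptyset$. Everything then reduces to tracking what rotation does to the intersection.

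First I would record the identity
\[
(A+1)\cap(B+1) = (A\cap B)+1,
\]
which holds because $x\in(A+1)\cap(B+1)$ means $x-1\in A$ and $x-1\in B$, i.e. $x-1\in A\cap B$; since rotation by one is a bijection of $[n]$, this is an equality of sets, and in particular $|(A+1)\cap(B+1)|=|A\cap B|$. Consequently the right-hand side is empty exactly when $A\cap B$ is empty, so $A$ and $B$ are disjoint if and only if $A+1$ and $B+1$ are disjoint, which is the desired equivalence. Equivalently, one may simply invoke the fact already noted in the text that $V\mapsto V+1$ is an automorphism of $K_{s-stab}(n,k)$, and an automorphism both preserves and reflects adjacency; the identity above is just the explicit verification of that automorphism property at the level of a single pair of vertices.

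Since the conclusion follows from this direct computation, I do not expect any real obstacle: the only points needing a word are that $A+1$ and $B+1$ are again vertices of the graph (true because rotation preserves all cyclic distances, so the $s$-stability condition is maintained) and that rotation is invertible modulo $n$. I would also emphasize that the hypothesis that $A$ and $B$ lie in \emph{different} classes is not actually required for the equivalence itself; its purpose is the intended application, where it guarantees that $A+1$ remains in the class of $A$ and $B+1$ remains in the class of $B$, so that rotation carries an edge between two fixed distinct classes to another edge between the same two classes. This invariance is precisely what will justify speaking of adjacency \emph{between classes} when constructing the auxiliary quotient graph in the next section.
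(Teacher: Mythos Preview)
Your proof is correct and follows essentially the same idea as the paper's: both reduce adjacency to disjointness and then argue that $c\in(A+1)\cap(B+1)$ forces $c-1\in A\cap B$. Your version is in fact cleaner and more complete, since you package this as the identity $(A+1)\cap(B+1)=(A\cap B)+1$ and thereby obtain both implications at once, whereas the paper only writes out one direction explicitly; your remark that the ``different classes'' hypothesis is not needed for the equivalence itself is also correct.
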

\begin{proof}
    Let $A=\{a_1,a_2, \dots,a_k\}$ and $B=\{b_1,b_2, \dots,b_k\}$.
    If $A$ and $B$ are adjacent then $A \cap B = \emptyset$.

    Suppose that $c\in (A+1 \cap B+1) $. Then, $c=a_i+1$ and $c=b_j+1$, which implies that $a_i=b_j$. Then $A+1$ and $B+1$ are adjacent.
\end{proof}


\section{Class Graph and proof of the main theorem}\label{Section: class graph}
\begin{definition}
Let $K_{s-stab}(n, k)$ with $n\geq sk+1$, we assign every vertex to its class as we have shown before. The \textbf{Class Graph} of the $s$-Stable Kneser Graph $K_{s-stab}(n, k)$, denoted by $CK_{s-stab}(n, k)$, has as vertices the classes of the vertices of $K_{s-stab}(n, k)$, and edges as in $K_{s-stab}(n, k)$. If two vertices are neighbors in $K_{s-stab}(n, k)$, then their classes are neighbors in $CK_{s-stab}(n, k)$.

\end{definition}
\begin{claim}\label{C:si spanning de grado bajo, entonces hamiltoniano}
Let $T$ be a spanning tree of the Class Graph $CK_{s-stab}(n, k)$ with $n\geq sk+1$, $s \geq 2$. If  $\deg_T(\mathcal{A})\leq |\mathcal{A}|$ for every $\mathcal{A}\in V(CK_{s-stab}(n, k))$, then the $s$-Stable Kneser Graph $K_{s-stab}(n, k)$ is Hamiltonian.
\end{claim}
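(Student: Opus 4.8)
The plan is to build the Hamiltonian cycle by gluing together the rotation cycles of the individual classes along the tree $T$. By Claim~\ref{rotarvecinos da un ciclo}, each class $\mathcal{A}$ carries a cycle $C_{\mathcal A}$ on its $|\mathcal A|$ vertices $\hat A,\hat A+1,\dots,\hat A+|\mathcal A|-1$ obtained by successive rotations, where $\hat A$ is a fixed representative. Since distinct classes are disjoint orbits and every vertex lies in some class, the union $\bigcup_{\mathcal A}C_{\mathcal A}$ is a $2$-factor of $K_{s-stab}(n,k)$, i.e. a spanning collection of vertex-disjoint cycles (each of length at least $3$, because $|\mathcal A|\ge n/k>s\ge 2$). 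The whole problem is thus to splice these $|V(CK_{s-stab}(n,k))|$ cycles into a single spanning cycle, using one gluing per edge of $T$.

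The gluing is the standard two-edge swap, and the point is that Lemma~\ref{vecinos entre clases} makes it available at every rotational offset. If $e=\{\mathcal A,\mathcal B\}$ is an edge of $T$, I choose adjacent representatives $A\in\mathcal A$ and $B\in\mathcal B$; applying Lemma~\ref{vecinos entre clases} repeatedly gives $A+t\sim B+t$ for every integer $t$. Hence, for any $t$, the two parallel edges $\{A+t,A+t+1\}\subseteq C_{\mathcal A}$ and $\{B+t,B+t+1\}\subseteq C_{\mathcal B}$ can be deleted and replaced by the crossing edges $\{A+t,B+t\}$ and $\{A+t+1,B+t+1\}$; a direct trace shows this turns two disjoint cycles into one cycle on the same vertex set. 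I will record, for each edge $e$ of $T$, the slot it consumes at each endpoint: writing $A=\hat A+\alpha_e$ and $B=\hat B+\beta_e$, the swap at offset $t$ deletes edge $(\alpha_e+t)\bmod|\mathcal A|$ of $C_{\mathcal A}$ and edge $(\beta_e+t)\bmod|\mathcal B|$ of $C_{\mathcal B}$.

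The heart of the argument, and the place where the hypothesis $\deg_T(\mathcal A)\le|\mathcal A|$ is used, is choosing the offsets $t_e$ so that, at every class $\mathcal A$, the edges of $C_{\mathcal A}$ deleted by the swaps of the tree-edges incident to $\mathcal A$ are pairwise distinct. I plan to do this greedily by rooting $T$ and processing its edges in a DFS order from the root. When an edge $e=\{\mathcal A,\mathcal B\}$ is processed with $\mathcal A$ the already-reached parent, I pick $t_e$ so that the slot $(\alpha_e+t_e)\bmod|\mathcal A|$ avoids the (fewer than $\deg_T(\mathcal A)\le|\mathcal A|$) slots already committed at $\mathcal A$; this is possible because $t_e$ ranges over all residues modulo $|\mathcal A|$, so some admissible slot is free. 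The induced slot at the child $\mathcal B$ is then forced, but this causes no conflict because $\mathcal B$ is reached for the first time, so none of its slots are yet committed, and the bound $\deg_T(\mathcal B)\le|\mathcal B|$ leaves room for its remaining incident edges later. The reason the coupling between the two endpoints of an edge is harmless is exactly this: every tree edge, when processed, has one fresh endpoint.

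Finally I would verify that performing all these swaps produces a single Hamiltonian cycle. For $2$-regularity, each vertex $v$ is an endpoint of at most two deleted cycle-edges (its two slots), and each such deletion of a cycle-edge at $v$ is accompanied by exactly one crossing edge at $v$, so $v$ keeps degree $2$; distinctness of the slots forces the two added edges at $v$ to land in distinct classes, so no multi-edge appears. For connectivity, processing the swaps in the same order, each swap joins two cycles lying in \emph{different} current components, since the deleted slots are original (hence still present) cycle-edges and the components evolve exactly like the growing forest of processed tree-edges, where a forest edge always joins two distinct trees; thus after the $|V(CK_{s-stab}(n,k))|-1$ swaps a single cycle remains. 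A connected $2$-regular simple spanning subgraph is a Hamiltonian cycle, which proves the claim. I expect the only genuine obstacle to be the offset-selection step; everything else is bookkeeping once the two-edge swap and its availability at all offsets are in hand.
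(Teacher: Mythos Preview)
Your proposal is correct and follows essentially the same strategy as the paper: start with the $2$-factor of rotation cycles (Claim~\ref{rotarvecinos da un ciclo}), root $T$, and merge the cycles one tree-edge at a time via two-edge swaps, using the hypothesis $\deg_T(\mathcal A)\le|\mathcal A|$ to guarantee a fresh slot at the parent class each time an edge is processed. Your writeup is, if anything, a bit more careful than the paper's---you explicitly invoke Lemma~\ref{vecinos entre clases} to justify that the swap is available at every rotational offset, note that $|\mathcal A|\ge n/k>s\ge 2$ so the rotation cycles are genuine cycles, and verify $2$-regularity and connectedness of the final graph.
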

\begin{proof}
    Root $T$ at a vertex $\mathcal{R}$, and partition the vertices in levels according to their distance to $\mathcal{R}$. Thus, vertices in level $\ell$ are at distance $\ell$ from $\mathcal{R}$. In particular, $\mathcal{R}$ is in level $0$.

    We construct the cycle inductively.
    Claim \ref{rotarvecinos da un ciclo} assures the existence of a cycle in $K_{s-stab}(n, k)$ for each class $\mathcal{A}$.
    For each class $\mathcal{A}$, let $A_1,A_2,\ldots,A_{|\mathcal{A}|}$
    be the vertices in the class such that 
    \[
A_1A_2\ldots A_{|\mathcal{A}|}A_1
    \]
    is said cycle, and such that $A_i$ is adjacent to $B_i$ in $K_{s-stab}(n, k)$ if $\mathcal{A}$ is adjacent to $\mathcal{B}$ in $T$ (with $i$ computed modulo $|\mathcal{A}|$ and $|\mathcal{B}|$, respectively).

Let $\mathcal{A}^1,\ldots,\mathcal{A}^{\deg_T(\mathcal{R})}$ be the vertices adjacent to $\mathcal{R}$ in level $1$.
As $\deg_T(\mathcal{R})\leq |\mathcal{R}|$, we know that the edges $R_iR_{i+1}$ and $R_j,R_{j+1}$ are different if $1\leq i<j\leq |\mathcal{R}|$.
For $1\leq i \leq \deg_T(\mathcal{R})$, change edges $R_iR_{i+1}$ and $A^i_iA^i_{i+1}$ for the edges $R_iA^i_i$ and $R_{i+1}A^i_{i+1}$.
This generates a cycle containing all vertices in classes from levels $0$ and $1$.

In order to connect vertices from level $\ell$ to level $\ell+1$, let  $\mathcal{B}$
be a vertex level $\ell$,  and let $\mathcal{C}^{1},\ldots,\mathcal{C}^{|\mathcal{B}|-1}$ be the vertices adjacent to $\mathcal{B}$ at level $\ell+1$. Let $B_iB_{i+1}$ be the edge that was deleted to connect vertices in the class $\mathcal{B}$ to their neighbor at level $\ell-1$.
As $\deg_T(\mathcal{B}^j)\leq |\mathcal{B}^j|$, if
$1\leq \alpha<\beta\leq |\mathcal{B}^j|-1$
then
$B_{i+\alpha}B_{i+\alpha+1}$ and $B_{i+\beta}B_{i+\beta+1}$
are two distinct edges in our current graph, where addition is done
modulo $|\mathcal{B}|$.
Then, for each 
$1\leq \alpha\leq |\mathcal{B}|-1$, exchange edges
$B_{i+\alpha}B_{i+\alpha+1}$ and $C^{\alpha}_{i+\alpha}C^{\alpha}_{i+\alpha+1}$
for edges 
$B_{i+\alpha}C^{\alpha}_{i+\alpha}$ and 
$B_{i+\alpha+1}C^{\alpha}_{i+\alpha+1}$, with addition done modulo $|\mathcal{B}|$ or modulo $|\mathcal{C}^{\alpha}|$, according of the vertex.
Therefore, after repeating this process for all levels of the rooted tree, we obtain a Hamiltonian cycle for $K_{s-stab}(n, k)$.
\end{proof}

In order to find the spanning tree needed to apply Claim \ref{C:si spanning de grado bajo, entonces hamiltoniano}, we are going to focus only in a particular set of edges. Given a class $\mathcal{A}= ( a_1,a_2, \dots, a_{i-1},a_i,a_{i+1} \dots ,a_k )$, every class of the form $\mathcal{B}=( a_1,a_2, \dots, a_{i-1}+1,a_i-1,a_{i+1} \dots ,a_k )$ is called a friend class of $\mathcal{A}$. I.e., $\mathcal{B}$ is a friend class of $\mathcal{A}$ if it can be obtained from it by adding $1$ to $a_i$ and  subtracting $1$ to the $a_{i+1}$ for some $1\leq i\leq k$. Notice
that for $\mathcal{B}$ a class, $a_{i+1}$ must be at least $s$. Thus, each class has at most $k$ friends, but some classes have fewer.

Notice that if we start with a vertex from the class $( a_1,a_2, \dots, a_{i-1},a_i,a_{i+1} \dots ,a_k )\in CK_{s-stab}(n, k)$, as shown in (I) below. Then we add 1 to ``$a_{i-1}$'' and subtract one from ``$a_i$'', obtained in (II) by moving the ``$x$'' between those positions, one place to the right.

\begin{figure}[h]
\centering
\begin{tikzpicture}[x=.5cm,y=.5cm]
\draw[step=.5cm,very thin] (0,0) grid (4,1);
\draw[step=.5cm,very thin] (4,0) grid (6,0);
\draw[step=.5cm,very thin] (4,1) grid (6,1);
\draw[step=.5cm,very thin] (6,0) grid (13,1);
\draw[step=.5cm,very thin] (13,0) grid (15,0);
\draw[step=.5cm,very thin] (13,1) grid (15,1);
\draw[step=.5cm,very thin] (15,0) grid (15,1);

\node[draw=none,shape=rectangle] at (-1.5,.5) {(I)}; 
\node[draw=none,shape=rectangle] at (0.5,.5) {x};   
\node[draw=none,shape=rectangle] at (3.5,.5) {x};      

\node[draw=none,shape=rectangle] at (5,.5) {\dots};

\node[draw=none,shape=rectangle] at (6.5,.5) {x}; 
\node[draw=none,shape=rectangle] at (9.5,.5) {x};   
\node[draw=none,shape=rectangle] at (12.5,.5) {x};  
\node[draw=none,shape=rectangle] at (14,.5) {\dots};

\node[draw=none,shape=rectangle] at (21.2,.5) {$\in (a_1,a_2,\dots,a_{i-1},a_i,a_{i+1},\dots,a_k)$};

\node[draw=none,shape=rectangle] at (2,-.5) {$\underbrace{ \ \quad \quad }_{a_1}$};  
\node[draw=none,shape=rectangle] at (8,-.5) {$\underbrace{ \ \quad \quad }_{a_{i-1}}$};  
\node[draw=none,shape=rectangle] at (11,-.5) {$\underbrace{\ \quad \quad  }_{a_i}$};  


\draw[step=.5cm,very thin] (0,-3) grid (4,-2);
\draw[step=.5cm,very thin] (4,-3) grid (6,-3);
\draw[step=.5cm,very thin] (4,-2) grid (6,-2);
\draw[step=.5cm,very thin] (6,-3) grid (13,-2);
\draw[step=.5cm,very thin] (13,-3) grid (15,-3);
\draw[step=.5cm,very thin] (13,-2) grid (15,-2);
\draw[step=.5cm,very thin] (15,-3) grid (15,-2);

\node[draw=none,shape=rectangle] at (-1.5,-2.5) {(II)}; 
\node[draw=none,shape=rectangle] at (0.5,-2.5) {x};   
\node[draw=none,shape=rectangle] at (3.5,-2.5) {x};      

\node[draw=none,shape=rectangle] at (5,-2.5) {\dots};

\node[draw=none,shape=rectangle] at (6.5,-2.5) {x}; 
\node[draw=none,shape=rectangle] at (10.5,-2.5) {x};   
\node[draw=none,shape=rectangle] at (12.5,-2.5) {x};  
\node[draw=none,shape=rectangle] at (14,-2.5) {\dots};

\node[draw=none,shape=rectangle] at (22.5,-2.5) {$\in (a_1,a_2,\dots,a_{i-1}+1,a_i-1,a_{i+1},\dots,a_k)$};

\node[draw=none,shape=rectangle] at (2,-3.5) {$\underbrace{ \ \quad \quad }_{a_1}$};  
\node[draw=none,shape=rectangle] at (8.5,-3.5) {$\underbrace{ \qquad \qquad }_{a_{i-1}+1}$};  
\node[draw=none,shape=rectangle] at (11.5,-3.5) {$\underbrace{}_{a_i-1}$};  

\draw[step=.5cm,very thin] (0,-6) grid (5,-5);
\draw[step=.5cm,very thin] (5,-6) grid (7,-6);
\draw[step=.5cm,very thin] (5,-5) grid (7,-5);
\draw[step=.5cm,very thin] (7,-6) grid (15,-5);

\node[draw=none,shape=rectangle] at (-1.5,-5.5) {(III)}; 
\node[draw=none,shape=rectangle] at (1.5,-5.5) {x};   
\node[draw=none,shape=rectangle] at (4.5,-5.5) {x};      

\node[draw=none,shape=rectangle] at (6,-5.5) {\dots};

\node[draw=none,shape=rectangle] at (7.5,-5.5) {x}; 
\node[draw=none,shape=rectangle] at (11.5,-5.5) {x};   
\node[draw=none,shape=rectangle] at (13.5,-5.5) {x};  
\node[draw=none,shape=rectangle] at (14.5,-5.5) {\dots};

\node[draw=none,shape=rectangle] at (22.5,-5.5) {$\in (a_1,a_2,\dots,a_{i-1}+1,a_i-1,a_{i+1},\dots,a_k)$};

\node[draw=none,shape=rectangle] at (3,-6.5) {$\underbrace{ \ \quad \quad }_{a_1}$};  
\node[draw=none,shape=rectangle] at (9.5,-6.5) {$\underbrace{ \qquad \qquad }_{a_{i-1}+1}$};  
\node[draw=none,shape=rectangle] at (12.5,-6.5) {$\underbrace{}_{a_i-1}$};

\end{tikzpicture}
\label{v136}
\end{figure}

After that, we move every ``$x$'' in (II) one position to the right, getting (III), still in the same class. We conclude that (I) is not a neighbor of (II), but (I) is a neighbor of (III). Then, $( a_1,a_2, \dots, a_{i-1},a_i,a_{i+1} \dots ,a_k )$ is adjacent to $( a_1,a_2, \dots, a_{i-1}+1,a_i-1,a_{i+1} \dots ,a_k )$ in $CK_{s-stab}(n, k)$. Thus, we have the following.
\begin{claim}
Friend classes are adjacent in $CK_{s-stab}(n, k)$.
\label{amigvec}
\end{claim}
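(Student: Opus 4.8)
The plan is to prove adjacency of the two classes by exhibiting a single pair of disjoint vertices, one in each class, and then invoking the defining property of the class graph: if two vertices are adjacent in $K_{s-stab}(n,k)$, their classes are adjacent in $CK_{s-stab}(n,k)$. This is exactly what the passage leading to the grids (I)--(III) describes, so the task is to turn that picture into a clean disjointness check. Fix a class $\mathcal{A}=(a_1,\dots,a_k)$ and a friend $\mathcal{B}=(a_1,\dots,a_{i-1}+1,a_i-1,\dots,a_k)$, where by definition $a_i\ge s$ so that $a_i-1\ge s-1$ and $\mathcal{B}$ is a legitimate class. Starting from a representative $A$ of $\mathcal{A}$ (this is (I)), moving the $i$-th ``$x$'' one place to the right produces a representative of $\mathcal{B}$ (this is (II)); rotating every ``$x$'' one place to the right then yields another vertex $B\in\mathcal{B}$ (this is (III)), which stays in $\mathcal{B}$ by Claim \ref{rotarvecinos da un ciclo}. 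It remains to show $A\cap B=\emptyset$.

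To make the check precise I would introduce coordinates. Write $A=\{p_1,\dots,p_k\}$ with the ``$x$''s in cyclic order, so that $p_{j+1}-p_j=a_j+1$ (indices mod $k$, positions mod $n$); the stability condition gives $a_j\ge s-1\ge 1$, hence consecutive ``$x$''s are at least $s\ge 2$ apart. With this notation the two operations give
\[
B=\{\,p_j+1 : j\ne i\,\}\cup\{\,p_i+2\,\}\pmod n.
\]
Disjointness then splits into two cases. For $j\ne i$, since $a_j\ge s-1\ge 1$ the position $p_j+1$ lies strictly between the consecutive ``$x$''s $p_j$ and $p_{j+1}$, hence is a blank of $A$ and $p_j+1\notin A$; equivalently, no two ``$x$''s of $A$ differ by exactly $1$. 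For the distinguished point $p_i+2$, the condition $a_i\ge s\ge 2$ gives $p_{i+1}=p_i+a_i+1\ge p_i+3$, so $p_i<p_i+2<p_{i+1}$ and again $p_i+2$ is a blank of $A$, whence $p_i+2\notin A$. Therefore $A\cap B=\emptyset$, so $A$ and $B$ are adjacent in $K_{s-stab}(n,k)$, and $\mathcal{A}$ and $\mathcal{B}$ are adjacent in $CK_{s-stab}(n,k)$.

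The one delicate point -- the main obstacle -- is the distinguished element $p_i+2$: rotating after moving the $i$-th ``$x$'' shifts that particular ``$x$'' by two rather than one, so it is the only place a collision with $A$ could occur, and it is avoided precisely because the friend operation requires $a_i\ge s$. Everywhere else the argument only uses $s\ge 2$, i.e.\ that every gap is nonempty, which is what keeps each singly-shifted ``$x$'' off the original ``$x$''s. The remaining care is purely bookkeeping: treating the indices cyclically (mod $k$) and the positions cyclically (mod $n$), so that the wraparound gap $a_k$ and the case $i=k$ are covered by the same computation. By Lemma \ref{vecinos entre clases} it is in any case enough to produce this one adjacent pair.
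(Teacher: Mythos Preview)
Your proof is correct and follows exactly the same approach as the paper: you formalize the picture (I)--(III) by moving the $i$-th ``$x$'' one place to the right and then rotating, and then make the disjointness check between (I) and (III) explicit in coordinates. The only addition over the paper is the careful verification that $p_i+2\notin A$ uses precisely the friend condition $a_i\ge s$, which the paper leaves to the figure.
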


\begin{definition}
We define $SCK_{s-stab}(n, k)$ as the spanning subgraph of $CK_{s-stab}(n, k)$ induced by the edges between friend classes.
\end{definition}
In order to apply Claim \ref{C:si spanning de grado bajo, entonces hamiltoniano}, we are going to show that $SCK_{s-stab}(n, k)$ is connected, and that the degree of each vertex in $SCK_{s-stab}(n, k)$ is at most the order of the class. We begin by showing the former.
\begin{lemma}\label{SCKconn}
$SCK_{s-stab}(n, k)$ is connected, for $n=sk+r$, $r \geq 1$, $s \geq 2$.
\end{lemma}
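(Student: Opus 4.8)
The plan is to reformulate classes and friend edges in terms of gap vectors, and then to show that every class can be joined by friend edges to a single canonical ``concentrated'' class. First I would record the correspondence implicit in the preliminaries: a class $\mathcal{A}$ is determined up to cyclic rotation by its vector of spaces $(a_1,\dots,a_k)$ with each $a_i\geq s-1$ and $\sum_{i=1}^{k}a_i=n-k$. Setting $b_i=a_i-(s-1)\geq 0$ and using $n=sk+r$, the $b_i$ are nonnegative integers with $\sum_{i=1}^{k}b_i=r$, so the vertices of $SCK_{s-stab}(n,k)$ are exactly the cyclic sequences $(b_1,\dots,b_k)$ of nonnegative integers summing to $r$, taken up to rotation. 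In this language the friend move of Claim~\ref{amigvec}, which replaces $(\dots,a_i,a_{i+1},\dots)$ by $(\dots,a_i+1,a_{i+1}-1,\dots)$ and is legal exactly when $a_{i+1}\geq s$, becomes: slide one unit from coordinate $i+1$ into the cyclically adjacent coordinate $i$, legal exactly when $b_{i+1}\geq 1$. This relation is symmetric, so the edges of $SCK_{s-stab}(n,k)$ are precisely these single-unit slides between cyclically adjacent gaps.

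Next I would fix the canonical class $\mathcal{C}$ with gap vector $(s-1+r,\,s-1,\dots,s-1)$ (all $r$ excess units in one gap) and show every class is connected to $\mathcal{C}$. For this I would run a consolidation argument on a fixed representative $(b_1,\dots,b_k)$: choosing coordinate $1$ as target and the potential $\Phi=\sum_{i=1}^{k}(i-1)b_i$, note that whenever $\Phi>0$ some coordinate $j\geq 2$ has $b_j\geq 1$, and sliding one unit from $j$ to $j-1$ is a legal friend move that lowers $\Phi$ by exactly $1$ while keeping every coordinate nonnegative, so each intermediate configuration is again a valid class. After finitely many friend moves $\Phi=0$, all units lie in coordinate $1$, and $\mathcal{C}$ is reached. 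Since $\mathcal{C}$ is a common endpoint for every class, $SCK_{s-stab}(n,k)$ is connected.

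The only delicate point is the passage between a labelled representative and classes, which are defined only up to rotation. I would handle it by recalling that rotation is an automorphism, so ``friend'' is well defined on classes, and each slide above is either a genuine friend edge or, when the slid tuple is merely a rotation of its predecessor, a harmless repetition of the same class; either way the produced sequence of classes is a walk in $SCK_{s-stab}(n,k)$. I do not expect a real obstruction here: the only quantity conserved by friend moves is the total excess $r$, so once the gap reformulation and the reading of friend edges as free single-unit slides on the cycle of gaps are in place, connectivity follows.
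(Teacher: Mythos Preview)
Your proof is correct and follows essentially the same approach as the paper: both connect every class to the single canonical class with all excess concentrated in one gap by sliding units between cyclically adjacent positions via friend moves. Your substitution $b_i=a_i-(s-1)$ and the potential $\Phi=\sum_i (i-1)b_i$ make the termination argument slightly cleaner than the paper's coordinate-by-coordinate description, but the underlying idea is identical.
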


\begin{proof}
For any vertex in $K_{s-stab}(n, k)$ the total amount of spaces is $n-k= sk+r-k=(s-1)k+r$.
Consider the vertex $\displaystyle( \underbrace{s-1,s-1, \dots, s-1}_{k-1}, s-1+r)\in SCK_{s-stab}(n, k)$.

We would like to prove that this vertex is connected to any vertex through edges of the graph.
Let $(a_1,a_2,a_3, \cdots, a_{k-1},a_k)$ be a vertex such that $\displaystyle \sum_{i=1}^{k} a_i=n-k$.

As shown in Claim \ref{amigvec}, $(s-1,s-1, \dots, s-1, s-1+r)$ is adjacent to $(s-1,s-1, \dots,s-1, s, s-2+r)$. Thus, applying the claim successively, we can find a path from   $(s-1,s-1, \dots, s-1, s-1+r)$ to 
$(s-1,s-1, \dots, s-1,2s-2+r-a_k, a_k)$.
But continuing with this process with the next coordinate, we can find a path to $(s-1,s-1, \dots,s-1, 3s-3+r-a_{k-1}- a_k,a_{k-1}, a_k)$. We can keep on going this way and connect $(s-1,s-1, \dots, s-1, s-1+r)$ to $(a_1,a_2,a_3, \cdots, a_{k-1},a_k)$.
\end{proof}
We are ready to show that the degrees of vertices in $SCK_{s-stab}(n, k)$ are bounded.
\begin{claim}\label{C: el grado esta bien}
    If  $\mathcal{A}\in V(SCK_{s-stab}(n, k))$, then $\deg_{SCK_{s-stab}(n, k)}(\mathcal{A})\leq |\mathcal{A}|$, for $n=sk+r$, $r \geq 1$, $s \geq 2$.
\end{claim}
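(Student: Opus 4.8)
The plan is to parametrize $\mathcal{A}$ by its gap sequence and count its neighbours directly, exploiting the rotational symmetry that Claim \ref{cardclase} encodes. Write $\mathcal{A}=(a_1,\dots,a_k)$ with each $a_i\ge s-1$ and $\sum_{i=1}^k a_i=n-k$. By Claim \ref{cardclase}, $|\mathcal{A}|=n/d$, where $d$ is the largest integer for which $(a_1,\dots,a_k)=(a_1,\dots,a_m)^d$ with $m=k/d$; in particular $d\mid k$ and $d\mid n$, so $r/d=(n-sk)/d$ is a nonnegative integer and
\[
|\mathcal{A}|=\frac{n}{d}=\frac{sk+r}{d}=sm+\frac{r}{d}.
\]
Hence it suffices to bound $\deg_{SCK_{s-stab}(n, k)}(\mathcal{A})$ by $2m$, since $s\ge 2$ forces $2m\le sm\le sm+r/d=|\mathcal{A}|$.

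First I would describe the neighbours of $\mathcal{A}$ concretely. By the definition of friend class and Claim \ref{amigvec}, every neighbour of $\mathcal{A}$ in $SCK_{s-stab}(n, k)$ is obtained from a vertex $V\in\mathcal{A}$ by sliding a single marker one position: for some cyclically adjacent pair, either $(a_i,a_{i+1})$ is replaced by $(a_i+1,a_{i+1}-1)$ (a \emph{right slide}, legal when $a_{i+1}\ge s$), corresponding to $\mathcal{B}$ being a friend of $\mathcal{A}$, or by $(a_i-1,a_{i+1}+1)$ (a \emph{left slide}, legal when $a_i\ge s$), corresponding to $\mathcal{A}$ being a friend of $\mathcal{B}$. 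There are $k$ markers, so a priori at most $k$ distinct right-slide and $k$ distinct left-slide neighbours.

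The key point is that the period-$m$ structure collapses these moves to at most $2m$ classes. Fix $V\in\mathcal{A}$ and let $\sigma$ be the rotation by $n/d$ positions. Since the gap sequence has period $m$ and one period spans exactly $m+\sum_{i=1}^m a_i=n/d$ positions, we get $\sigma(V)=V$; and because $\sigma$ is an automorphism of $K_{s-stab}(n, k)$ (the mechanism behind Lemma \ref{vecinos entre clases}), it sends the marker at the $i$-th boundary to the marker at the $(i+m)$-th boundary. Consequently $\sigma\big(\mathrm{rightslide}_i(V)\big)=\mathrm{rightslide}_{i+m}(V)$, so these two vertices lie in the same class; thus the right-slide neighbour depends only on $i\bmod m$, giving at most $m$ right-slide neighbours, and symmetrically at most $m$ left-slide neighbours. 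Therefore $\deg_{SCK_{s-stab}(n, k)}(\mathcal{A})\le 2m\le sm\le |\mathcal{A}|$.

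The main obstacle is this collapsing step: making precise that a single marker slide commutes with the symmetry $\sigma$, so that moves one full period apart land in the same orbit. Everything else is bookkeeping, and it only helps the bound; invalid moves (those violating $a_i\ge s-1$) and any coincidences between left and right slides merely decrease the count, so they do not affect the inequality.
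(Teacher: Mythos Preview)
Your argument is correct and follows essentially the same route as the paper: use Claim~\ref{cardclase} to write $\mathcal{A}=(a_1,\dots,a_m)^d$ with $m=k/d$ and $|\mathcal{A}|=n/d$, and then observe that the period-$m$ structure makes a slide at position $i$ and a slide at position $i+m$ land in the same class, so the number of distinct neighbours is controlled by $m$ rather than $k$. Your ``$\sigma$ commutes with a single slide'' step is exactly the paper's remark that the friend class obtained at index $i$ coincides with the one obtained at index $i+pm$.

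The one genuine difference is that you count \emph{both} directions of the friend relation (right and left slides), whereas the paper only counts friends of $\mathcal{A}$, i.e.\ right slides, and writes $\deg_{SCK_{s-stab}(n,k)}(\mathcal{A})=k/d$. Since ``$\mathcal{B}$ is a friend of $\mathcal{A}$'' is not a symmetric relation on classes (e.g.\ from $(2,3,4)$ one reaches $(3,2,4)$ by a right slide, but no right slide from $(3,2,4)$ returns to the orbit of $(2,3,4)$), the edge set of $SCK_{s-stab}(n,k)$ really does include left-slide neighbours as well, and your bound $2m$ is the honest one. This costs you nothing: with $s\ge 2$ you still get $2m\le sm\le sm+r/d=|\mathcal{A}|$, so the claim holds. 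In short, your version is a slightly more careful execution of the paper's own argument.
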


\begin{proof}

By Claim \ref{cardclase}  $|\mathcal{A}|=\dfrac{n}{d}$, where $d$ is the smallest number such that $\mathcal{A} = (a_1,a_2, \dots , a_{\frac{k}{d}})^d$.
Notice then that when $j-i=p\frac{k}{d}$, with $1\leq p\leq d$, then friend class of $\mathcal{A}$ obtained by adding $1$ to $a_i$ and subtracting $1$ to $a_{i+1}$ is the same as any obtained by adding  $1$ to $a_j$ and subtracting $1$ to $a_{j+1}$. Therefore, $\mathcal{A}$ has at most $\frac{k}{d}$ friend classes.   
Therefore,  by Claim \ref{cardclase}, $\deg_{SCK_{s-stab}(n, k)}(\mathcal{A})=\frac{k}{d}< \frac{n}{d}=|\mathcal{A}|$.
   
    
\end{proof}
We are ready to prove our main result.
\begin{proof}[Proof of Theorem \ref{theo: main}]
When $n=sk$, $K_{s-stab}(n, k)$ is isomorphic to $K_s$. Thus, if $s\geq 3$, it is Hamiltonian.

    Assume now that $n\geq sk+1$, and let $T$ be a spanning tree of $SCK_{s-stab}(n, k)$. 
    By Claim $\ref{C: el grado esta bien}$, $\deg_T(\mathcal{A})\leq |\mathcal{A}|$.
    As $V(SCK_{s-stab}(n, k))=V(CK_{s-stab}(n, k))$, $T$ is a spanning tree of $CK_{s-stab}(n, k)$.  Thus, by Claim \ref{C:si spanning de grado bajo, entonces hamiltoniano}, the $s$-Stable Kneser Graph $K_{s-stab}(n, k)$ is Hamiltonian.
\end{proof}

\section{Acknowledgements}
This work was partially supported by Universidad Nacional de San Luis, grants PROICO 03-0723 and PROIPRO 03-2923, MATH AmSud, grants 21-MATH-05 and 22-MATH-02, and Agencia I+D+I grants PICT 2020-00549 and PICT 2020-04064. The first author is supported by a Ph.D. scholarship funded by Consejo Nacional de Investigaciones
Científicas y Técnicas.

\bibliographystyle{abbrv}
\bibliography{bibfile}

\begin{thebibliography}{10}

\bibitem{aplicaciontopo}
M.~Alishahi and H.~Hajiabolhassan.
\newblock A generalization of gale's lemma.
\newblock {\em Journal of Graph Theory}, 88(2):337--346, 2018.

\bibitem{defestables}
N.~Alon, L.~Drewnowski, and T.~Luczak.
\newblock Stable kneser hypergraphs and ideals in n with the nikodym property.
\newblock In {\em Proc. Amer. Math. Soc}, volume 137, pages 467--471, 2009.

\bibitem{Neighcomplexes}
A.~Bj{\"o}rner and M.~De~Longueville.
\newblock Neighborhood complexes of stable kneser graphs.
\newblock {\em Combinatorica}, 23(1):23--34, 2003.

\bibitem{Braun}
B.~Braun.
\newblock Symmetries of the stable kneser graphs.
\newblock {\em Advances in Applied Mathematics}, 45(1):12--14, 2010.

\bibitem{Indepcomplex}
B.~Braun.
\newblock Independence complexes of stable kneser graphs.
\newblock {\em The Electronic Journal of Combinatorics}, 18(1):P118, 2011.

\bibitem{Chen}
Y.-C. Chen.
\newblock Kneser graphs are hamiltonian for $n\geq 3k$.
\newblock {\em Journal of Combinatorial Theory, Series B}, 80(1):69--79, 2000.

\bibitem{paper}
A.~V. Ledezma, A.~Pastine, P.~Torres, and M.~Valencia-Pabon.
\newblock On the diameter of schrijver graphs.
\newblock {\em arXiv preprint arXiv:2112.01884}, 2021.

\bibitem{Lov}
L.~Lov{\'a}sz.
\newblock Kneser's conjecture, chromatic number, and homotopy.
\newblock {\em Journal of Combinatorial Theory, Series A}, 25(3):319--324,
  1978.

\bibitem{MMN}
A.~Merino, T.~M{\"u}tze, and Namrata.
\newblock Kneser graphs are hamiltonian.
\newblock In {\em Proceedings of the 55th Annual ACM Symposium on Theory of
  Computing}, pages 963--970, 2023.

\bibitem{Meu}
F.~Meunier.
\newblock The chromatic number of almost stable kneser hypergraphs.
\newblock {\em Journal of Combinatorial Theory, Series A}, 118(6):1820--1828,
  2011.

\bibitem{Mut}
T.~M{\"u}tze.
\newblock Proof of the middle levels conjecture.
\newblock {\em Proceedings of the London Mathematical Society},
  112(4):677--713, 2016.

\bibitem{mutze2024hamiltonicity}
T.~M{\"u}tze and Namrata.
\newblock Hamiltonicity of schrijver graphs and stable kneser graphs.
\newblock {\em arXiv preprint arXiv:2401.01681}, 2024.

\bibitem{MNW}
T.~M{\"u}tze, J.~Nummenpalo, and B.~Walczak.
\newblock Sparse kneser graphs are hamiltonian.
\newblock {\em Journal of the London Mathematical Society}, 103(4):1253--1275,
  2021.

\bibitem{Sch}
A.~Schrijver.
\newblock Vertex-critical subgraphs of kneser-graphs.
\newblock {\em Nieuw Archief voor Wiskunde}, 26(3):454--461, 1978.

\bibitem{SS}
I.~Shields and C.~D. Savage.
\newblock A note on hamilton cycles in kneser graphs.
\newblock {\em Bull. Inst. Combin. Appl}, 40:13--22, 2004.

\bibitem{Tor}
P.~Torres.
\newblock The automorphism group of the s-stable kneser graphs.
\newblock {\em Advances in Applied Mathematics}, 89:67--75, 2017.

\bibitem{TorPav}
P.~Torres and M.~Valencia-Pabon.
\newblock Shifts of the stable kneser graphs and hom-idempotence.
\newblock {\em European Journal of Combinatorics}, 62:50--57, 2017.

\end{thebibliography}
\end{document}